\newtheorem{teo}{Theorem}[section]
\newtheorem{remark}[teo]{Remark}
\newtheorem{proposition}[teo]{Proposition}
\title[Tanaka rigidity of graph Lie algebras]{Tanaka rigidity of graph Lie algebras}
\author{Mauricio Godoy Molina}
\address{Departamento de Matem\'atica y Estad\'istica, Universidad de La Frontera, Av. Francisco Salazar 01145 Temuco, Chile.}
\subjclass[2010]{53C24; 17B30; 17B66; 05C20; 05C25}
\keywords{2-step nilpotent Lie algebras, labeled directed graph, infinitesimal symmetries, Tanaka prolongation.}
\email{mauricio.godoy@ufrontera.cl}
\thanks{This research is partially supported by Fondecyt \#1181084}
\begin{document}
\maketitle

\begin{abstract}
We give sufficient conditions on a labeled direct graph to determine whether the Tanaka prolongation of its associated Lie algebra is infinite-dimensional. In the case that all directed edges are labeled differently, the corresponding graph Lie algebra is of infinite type if and only if the graph has a vertex of degree one.
\end{abstract}

\section{Introduction}

The main goal of the present note is to provide some easy-to-check criteria to determine whether a large family of differential systems has an infinite dimensional Lie algebra of symmetries. These differential systems are related to directed graphs and have been actively studied in recent years after being introduced by Dani and Mainkar in \cite{mainkar1}, see for example \cite{aaa,CMS,mainkar2,GL}. 

Let us first recall the notion of symmetries for differential systems. A differential system is a pair $(M,D)$, where $M$ is a differentiable manifold and $D\hookrightarrow TM$ is a distribution on $M$. These objects and their many properties are very interesting since they encode valuable information in many areas of mathematics. As a very incomplete sample of the context where differential systems appear, the interested reader can check references in sub-Riemannian geometry \cite{abb,m}, geometric control theory \cite{j}, and some specific connections between differential geometry and analysis \cite{klr}.

The modern framework to study the symmetries of a differential system $(M,D)$ is based on the study of the Lie algebra of infinitesimal symmetries
\begin{equation}\label{eq:sym}
{\rm sym}(M,D)=\{X\in{\mathfrak X}(M)\;|\;[X,\Gamma(D)]\subseteq\Gamma(D)\},
\end{equation}
where $\Gamma(D)$ denotes the vector fields on $M$ that take values in the distribution $D$. The vector fields in the algebra ${\rm sym}(M,D)$ can be seen as derivatives of diffeomorphisms of $M$ preserving $D$. Determining whether ${\rm sym}(M,D)$ is finite or infinite dimensional has important consequences. When infinite dimensional, the system might admit a continuous family of admissible deformations which can be exploited to develop analytic tools, such as in \cite{KR,klr}. When finite dimensional, the system is called rigid and an interesting problem arises concerning the structure of the algebra, such as in the classical paper by \'E. Cartan \cite{c} or a more recent generalization \cite{ben}.

One of the most important tools to study the Lie algebra \eqref{eq:sym} is the well-known Tanaka prolongation \cite{tanaka}. This lineal algebraic procedure produces a Lie algebra ${\rm sym}(N,{\mathfrak n}_{-1})$ inductively in the case that $N$ is a (connected and simply connected) nilpotent Lie group associated to a graded nilpotent Lie algebra ${\mathfrak n}={\mathfrak n}_{-\mu}\oplus\cdots\oplus{\mathfrak n}_{-1}$. Even though each step of the Tanaka prolongation is explicit, it is in general very difficult to compute. Nevertheless, some general abstract results can be found, for example it is known that in step $\mu=2$ ``most'' Lie algebras ${\rm sym}(N,{\mathfrak n}_{-1})$ are finite dimensional, see \cite{gkmv}. We call this phenomenon Tanaka rigidity.

After this necessary detour, let us focus our attention on the differential systems that are of interest for this paper. Given a finite simple directed graph $(V,E)$, one can label its edges via a surjective labeling function $c\colon E\to{\mathcal C}$, where ${\mathcal C}=\{c_1,\dotsc,c_m\}$ is the set of labels. We say that the triplet $G=(V,E,c)$ is a labeled directed graph, see \cite{GL}. Enumerating the vertices $\{x_1,\dotsc,x_n\}$, one can define the vector space
\begin{equation}
{\rm Lie}(G)={\rm span}_F\{x_1,\dotsc,x_n,c_1,\dotsc,c_m\}
\end{equation}
where $F$ is an arbitrary field. With the operation
\[
[x_i,x_j]_{{\rm Lie}(G)}=\begin{cases}c_l,&\mbox{if }c(\overrightarrow{x_ix_j})=c_l\\-c_l,&\mbox{if }c(\overrightarrow{x_jx_i})=c_l\\0,&\mbox{otherwise}.\end{cases}\quad,\quad[x_i,c_l]_{{\rm Lie}(G)}=[c_l,c_m]_{{\rm Lie}(G)}=0
\]
the vector space ${\rm Lie}(G)$ becomes a 2-step nilpotent Lie algebra with a natural grading
\[
{\rm Lie}(G)={\mathfrak g}_{-2}\oplus{\mathfrak g}_{-1},\quad{\mathfrak g}_{-2}={\rm span}_F\{c_1,\dotsc,c_m\},\quad{\mathfrak g}_{-1}={\rm span}_F\{x_1,\dotsc,x_n\},
\]
generated by ${\mathfrak g}_{-1}$. These algebras are known as graph Lie algebras.

Although this definition makes sense for arbitrary fields, we will assume for the rest of the paper that $F={\mathbb R}$, obtaining a differential system $({\rm LIE}(G),{\mathfrak G}_{-1})$, where ${\rm LIE}(G)$ is the unique (up to isomorphism) connected and simply connected Lie group with Lie algebra ${\rm Lie}(G)$ and ${\mathfrak G}_{-1}$ is the distribution generated by left-translating ${\mathfrak g}_{-1}$. 

With all these concepts at hand, we can be state more precisely the main result of the present article: we prove that if all labels of the graph $G$ are different, then the Tanaka prolongation of $({\rm LIE}(G),{\mathfrak G}_{-1})$ is infinite dimensional if and only is $G$ has a vertex of degree one. Even though we also analyze the more general situation in which some directed edges have the same label, a complete characterization of Tanaka rigidity for arbitrary labeled directed graphs remains elusive.

\section{Tanaka rigidity for graph Lie algebras}

For the rest of the present article we follow the notation from the introduction: $G=(V,E,c)$ denotes a labeled directed graph with vertices $V=\{x_1,\dotsc,x_n\}$ and surjective labeling function $c\colon E\to{\mathcal C}=\{c_1,\dotsc,c_m\}$. 

The following hypothesis will play an important role in some of the main results of this paper.
\begin{equation*}\label{hyp:H}
m=|{\mathcal C}|=|E|,\mbox{ that is, every edge is labeled uniquely}.\tag{{\bf H}}
\end{equation*}

It is interesting to observe that this assumption was implicit in \cite{mainkar1}, where graph Lie algebras were introduced. As shown in \cite{GL}, hypothesis \eqref{hyp:H} implies that changing the orientation of edges produces isomorphic Lie algebras. Relaxing this hypothesis, as suggested in \cite{CMS}, has the obvious advantage of generality but the cost is that, in general, there may appear non-trivial Abelian factors in ${\mathfrak g}_{-1}$, see \cite{mainkar2}. Under hypothesis \eqref{hyp:H} this does not occur.

\begin{proposition}\label{prop:center}
Assume $G$ is a connected graph satisfying \eqref{hyp:H}. Then 
\[
{\mathcal Z}({\rm Lie}(G))\cap{\mathfrak g}_{-1}=\{0\},
\]
where ${\mathcal Z}({\rm Lie}(G))$ denotes the center of the graph Lie algebra ${\rm Lie}(G)$.
\end{proposition}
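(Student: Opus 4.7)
The plan is to take an arbitrary element $x = \sum_{i=1}^n a_i x_i \in {\mathcal Z}({\rm Lie}(G)) \cap {\mathfrak g}_{-1}$ and show that all coefficients $a_i$ must vanish. Since ${\rm Lie}(G)$ is 2-step nilpotent, the relation $[{\mathfrak g}_{-1}, {\mathfrak g}_{-2}] = 0$ holds automatically from the definition of the bracket, so the centrality condition reduces to the identities $[x, x_j] = 0$ for each vertex $x_j$.

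For each fixed $j$, expanding gives $\sum_i a_i [x_i, x_j] = 0$ in ${\mathfrak g}_{-2}$. Only those indices $i$ for which $\{x_i, x_j\}$ is an edge contribute, and each such term equals $\pm a_i c_{l(i,j)}$, where $c_{l(i,j)}$ denotes the label of that edge. This is precisely where hypothesis \eqref{hyp:H} enters: distinct edges carry distinct labels, so as $x_i$ ranges over the neighbors of $x_j$ the symbols $c_{l(i,j)}$ are pairwise distinct basis vectors of ${\mathfrak g}_{-2}$. Their linear independence then forces $a_i = 0$ for every neighbor $x_i$ of $x_j$.

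Iterating this conclusion over all $j$ shows that $a_i = 0$ for every vertex possessing at least one neighbor. Invoking the connectedness of $G$ (in the nontrivial case $n \geq 2$), every vertex has at least one neighbor, and I conclude that $x = 0$.

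I do not expect any serious obstacle here: the argument is essentially an unpacking of definitions, and the only subtle ingredient is the observation that \eqref{hyp:H} makes the vectors $[x_i, x_j]$, as $x_i$ varies over the neighbors of a fixed $x_j$, linearly independent in ${\mathfrak g}_{-2}$. Dropping \eqref{hyp:H} would allow two distinct edges at a common vertex to share a label, letting their coefficients cancel and producing precisely the sort of ``non-trivial Abelian factors'' in ${\mathfrak g}_{-1}$ mentioned in the paragraph preceding the proposition.
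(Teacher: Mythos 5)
Your proof is correct and follows essentially the same route as the paper: bracket $x$ against each vertex, observe that hypothesis \eqref{hyp:H} makes the labels $[x_i,x_j]$ over the neighbors of a fixed $x_j$ pairwise distinct basis vectors of ${\mathfrak g}_{-2}$, and conclude that the coefficients of all neighbors vanish. Your explicit appeal to connectedness to guarantee that every vertex actually has a neighbor is a small point the paper glosses over, but it is the same argument.
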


\begin{proof}
Let $x=\sum_{i=1}^{|V|}\alpha_ix_i\in{\mathcal Z}({\rm Lie}(G))\cap{\mathfrak g}_{-1}$ and let $v\in V$ be any vertex of $G$, then
\[
[v,x]_{{\rm Lie}(G)}=\sum_{x_i\in{\mathcal N}_{v}}\alpha_i[v,x_i]_{{\rm Lie}(G)}=0
\]
is a linear combination of labels. Since $\dim{\mathfrak g}_{-2}=|{\mathcal C}|=|E|$, it follows that the set
\[
\{[v,x_i]_{{\rm Lie}(G)}\colon x_i\in{\mathcal N}_v\}
\]
is linearly independent. As a consequence, we conclude that $\alpha_i=0$, whenever $x_i\in{\mathcal N}_{v}$. As $v\in V$ was chosen arbitrarily, we have that $x=0$.
\end{proof}

To prove the main abstract result of the present paper, we will use an adapted form of a criterion found by N. Tanaka in \cite[Corollary 2 of Theorem 11.1]{tanaka}. This criterion has been used extensively in the literature and it is often called the {\em corank one condition}. It states that a graded nilpotent Lie algebra ${\mathfrak g}={\mathfrak g}_{-\mu}\oplus\cdots\oplus{\mathfrak g}_{-1}$ is of infinite type if, and only if, there exists $x\in{\mathfrak g}_{-1}\setminus\{0\}$ and a hyperplane $\Pi\subset{\mathfrak g}_{-1}$, such that
\[
[x,y]=0\quad\mbox{for all }y\in\Pi.
\]

\begin{teo}\label{th:deg1}
If there exists a vertex $v\in V$ of degee one, then $Lie(G)$ is of infinite type.
\end{teo}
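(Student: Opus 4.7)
The plan is to apply the corank one criterion of Tanaka recalled just before the theorem: it suffices to produce a nonzero element $x\in{\mathfrak g}_{-1}$ together with a hyperplane $\Pi\subset{\mathfrak g}_{-1}$ annihilated by $x$, i.e.\ with $[x,y]_{{\rm Lie}(G)}=0$ for every $y\in\Pi$. The existence of a degree-one vertex suggests a very natural candidate for $x$, so the proof should be short and essentially reduce to unwinding the definition of the bracket on $\mathrm{Lie}(G)$.

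Concretely, I would let $v\in V$ be a vertex of degree one and let $w\in V$ be its unique neighbor (regardless of the orientation of the edge joining them). I would then take $x:=v\in{\mathfrak g}_{-1}\setminus\{0\}$, and set
\[
\Pi := {\rm span}_{\mathbb R}\{x_i\in V\colon x_i\neq w\}\subset{\mathfrak g}_{-1}.
\]
Since $V$ is a basis of ${\mathfrak g}_{-1}$ and exactly one basis vector has been removed, $\Pi$ is a hyperplane. By the bracket relations defining ${\rm Lie}(G)$, one has $[v,x_i]_{{\rm Lie}(G)}=0$ whenever $x_i$ is not adjacent to $v$; but the only vertex adjacent to $v$ is $w\notin\Pi$, so $[v,x_i]_{{\rm Lie}(G)}=0$ for every basis vector $x_i$ of $\Pi$, and hence by linearity for every $y\in\Pi$. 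Applying the corank one condition then yields that ${\rm Lie}(G)$ is of infinite type.

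There is no real obstacle in this argument; the only thing to be careful about is that the edge incident to $v$ might be directed either way ($\overrightarrow{vw}$ or $\overrightarrow{wv}$), but this has no effect on the vanishing of $[v,x_i]_{{\rm Lie}(G)}$ for $x_i\neq w$, since in both cases the only nonzero bracket involving $v$ and a basis vector of ${\mathfrak g}_{-1}$ is $[v,w]_{{\rm Lie}(G)}=\pm c(\overrightarrow{vw})$. It is also worth noting that the statement does not require connectedness of $G$, and indeed the argument above does not use it: the construction of $x$ and $\Pi$ is purely local around the degree-one vertex $v$. Hypothesis \eqref{hyp:H} is likewise not needed here, although it will presumably be important for the converse direction mentioned in the introduction.
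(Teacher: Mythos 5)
Your proof is correct and follows exactly the paper's argument: take $x=v$ and the hyperplane $\Pi={\rm span}\{V\setminus\{w\}\}$, observe that all brackets of $v$ with elements of $\Pi$ vanish, and invoke the corank one condition. The additional remarks about orientation, connectedness, and hypothesis \eqref{hyp:H} are accurate but not needed.
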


\begin{proof}
Since $\deg(v)=1$, then there exists only one vertex $w\in V$ such that 
\[
[v,w]_{{\rm Lie}(G)}\neq0.
\]
Consider the hyperplane $\Pi={\rm span}\{V\setminus\{w\}\}\subset{\mathfrak g}_{-1}$. It follows directly that
\[
[v,u]_{{\rm Lie}(G)}=0\quad\mbox{for all }u\in V\setminus\{w\},
\]
since $\deg(v)=1$. The conclusion follows from the corank one condition.
\end{proof}

As is shown in \cite[Example 2.3]{GL}, it is possible to find two labeled directed graphs ${\mathcal G}_1$ and ${\mathcal G}_2$, where ${\mathcal G}_1$ does not have a vertex of degree one while ${\mathcal G}_2$ does, and such that ${\rm Lie}({\mathcal G}_1)\cong{\rm Lie}({\mathcal G}_2)$. As a consequence, the Lie algebra ${\rm Lie}({\mathcal G}_1)$ is of infinite type. The existence of the isomorphism in the cited example follows from the classification given in \cite{magnin}, nevertheless, in general, it can be quite tricky to find such isomorphism explicitly. 

The previous observation clearly implies that Theorem \ref{th:deg1} gives only a sufficient condition. The following result generalizes Theorem \ref{th:deg1}, implying that the graph Lie algebra ${\rm Lie}({\mathcal G}_1)$ above is of infinite type without resorting to the explicit isomorphism.

\begin{proposition}
Suppose there exists a vertex $v\in V$ of degree $\deg(v)=k\geq2$ such that
\begin{equation}\label{eq:eqlab}
\dim{\rm span}\{[v,w_1]_{{\rm Lie}(G)},\dotsc,[v,w_k]_{{\rm Lie}(G)}\}=1
\end{equation}
where $w_1,\dotsc,w_k\in V$ are the neighbors of $v$. Then $Lie(G)$ is of infinite type.
\end{proposition}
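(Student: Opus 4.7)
The plan is to apply the corank one condition with $x = v$, constructing an explicit hyperplane $\Pi \subset \mathfrak{g}_{-1}$ that is annihilated by $\mathrm{ad}(v)$. Note that this proposition does not assume hypothesis \eqref{hyp:H}; indeed, under \eqref{hyp:H} the brackets $[v,w_i]$ would be linearly independent basis elements (up to sign), so \eqref{eq:eqlab} can only occur when several edges incident to $v$ share the same label.

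First, I would analyze the bracket map
\[
\varphi\colon\mathrm{span}\{w_1,\dotsc,w_k\}\longrightarrow\mathfrak{g}_{-2},\quad w\longmapsto[v,w]_{{\rm Lie}(G)}.
\]
Since its image has dimension $1$ by hypothesis \eqref{eq:eqlab}, its kernel $K=\ker\varphi$ has dimension $k-1$. Concretely, if $[v,w_i]_{{\rm Lie}(G)}=\lambda_i c$ for some common label $c$ and scalars $\lambda_i\in F$ (not all zero, since the span is one-dimensional and nonzero as $\deg(v)=k\geq 2$), then $K$ consists of those combinations $\sum\beta_iw_i$ with $\sum\beta_i\lambda_i=0$. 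This is a $(k-1)$-dimensional subspace of $\mathfrak{g}_{-1}$ on which $\mathrm{ad}(v)$ vanishes.

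Next, I would assemble the hyperplane
\[
\Pi=K\oplus\mathrm{span}\bigl\{V\setminus\{w_1,\dotsc,w_k\}\bigr\}\subset\mathfrak{g}_{-1}.
\]
A quick dimension count gives $\dim\Pi=(k-1)+(n-k)=n-1$, so $\Pi$ is indeed a hyperplane (note that $v$ itself lies in $V\setminus\{w_1,\dotsc,w_k\}$, which causes no issue since $[v,v]_{{\rm Lie}(G)}=0$). For any $y\in\Pi$, decomposing $y=y_K+y'$ with $y_K\in K$ and $y'$ a combination of non-neighbors of $v$, both pieces bracket trivially with $v$: the first by construction of $K$, the second because non-neighbors of $v$ by definition bracket to zero with $v$.

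Finally, since $v\neq 0$ and $[v,y]_{{\rm Lie}(G)}=0$ for every $y\in\Pi$, the corank one criterion recalled before Theorem \ref{th:deg1} immediately yields that $\mathrm{Lie}(G)$ is of infinite type. The only mildly subtle step is the dimension bookkeeping ensuring $\Pi$ is a genuine hyperplane; the rest is a direct unpacking of \eqref{eq:eqlab}.
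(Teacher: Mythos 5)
Your proposal is correct and follows essentially the same route as the paper: both apply the corank one condition with $x=v$ and the hyperplane spanned by the non-neighbors of $v$ together with the $(k-1)$-dimensional kernel of ${\rm ad}_v$ restricted to ${\rm span}\{w_1,\dotsc,w_k\}$. The only cosmetic difference is that the paper exhibits that kernel via the explicit vectors $w_1-(-1)^{i_\ell}w_\ell$, whereas you describe it abstractly as $\ker\varphi$; the dimension count and the conclusion are identical.
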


\begin{proof}
Condition \eqref{eq:eqlab} means that all the edges starting or ending in $v$ have the same label. Without loss of generality, by renaming or changing signs if necessary, suppose the directed edge $\overrightarrow{vw_1}$ exists and let $c\in{\mathcal C}$ be its label. Then there exist exponents $i_2,\dotsc,i_k\in\{0,1\}$ such that
\[
[v,w_\ell]_{{\rm Lie}(G)}=(-1)^{i_\ell}c\quad,\quad\ell=2,\dotsc,k.
\]
From this equalities, it follows directly that
\[
[v,w_1-(-1)^{i_2}w_2]_{{\rm Lie}(G)}=\cdots=[v,w_1-(-1)^{i_k}w_k]_{{\rm Lie}(G)}=0.
\]
The claim is a consequence of the corank one condition considering the hyperplane
\[
\Pi={\rm span}\{V\setminus{\mathcal N}_v,w_1-(-1)^{i_2}w_2,\dotsc,w_k-(-1)^{i_2}w_k\}\subset{\mathfrak g}_{-1}
\]
where ${\mathcal N}_v=\{w_1,\dotsc,w_k\}$ is the neighborhood of $v$.
\end{proof}

Note that the corank one condition is obviously equivalent to the existence of an element $x\in{\mathfrak g}_{-1}$ and a hyperplane $\Pi\subset{\mathfrak g}_{-1}$ such that
\[
\ker{\rm ad}_x\cap{\mathfrak g}_{-1}=\Pi,
\]
or equivalently
\begin{equation}\label{eq:dimcount}
\dim(\ker{\rm ad}_x\cap{\mathfrak g}_{-1})=\dim{\mathfrak g}_{-1}-1.
\end{equation}
Using this rewriting of the corank one condition, we can show that under hypothesis \eqref{hyp:H}, Theorem \ref{th:deg1} gives an equivalence.

\begin{teo}
Assuming hypothesis \eqref{hyp:H}, the following equality
\[
\dim(\ker{\rm ad}_v\cap{\mathfrak g}_{-1})=|V|-\deg(v)
\]
holds for any vertex $v\in V$. 

As a consequence, in the case that every edge of $G$ is labeled differently, then the graph Lie algebra ${\rm Lie}(G)$ is of infinite type if, and only if, $G$ has a vertex of degree one.
\end{teo}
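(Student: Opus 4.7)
The dimension formula is obtained by copying the argument from the proof of Proposition~\ref{prop:center}. Writing $y=\sum_j\beta_jx_j\in{\mathfrak g}_{-1}$ and expanding ${\rm ad}_v(y)=\sum_{x_j\in{\mathcal N}_v}\beta_j[v,x_j]_{{\rm Lie}(G)}$, hypothesis \eqref{hyp:H} guarantees that the brackets $\{[v,x_j]_{{\rm Lie}(G)}:x_j\in{\mathcal N}_v\}$ are, up to sign, distinct basis labels of ${\mathfrak g}_{-2}$, hence linearly independent. Thus $\ker{\rm ad}_v\cap{\mathfrak g}_{-1}={\rm span}\{x_j:x_j\notin{\mathcal N}_v\}$, of dimension $|V|-\deg(v)$.

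For the equivalence, one direction is Theorem~\ref{th:deg1}. For the converse I work with a connected $G$ in which every vertex has degree at least two, and invoke the rewriting \eqref{eq:dimcount} of the corank-one condition: it suffices to rule out any nonzero $x=\sum_i\alpha_ix_i$ with $\dim\,{\rm im}\,{\rm ad}_x\leq 1$. Proposition~\ref{prop:center} excludes ${\rm ad}_x=0$, so such an $x$ would satisfy $[x,x_j]_{{\rm Lie}(G)}=\lambda_jc$ for all $j$ with a common $c\in{\mathfrak g}_{-2}\setminus\{0\}$. The gain from \eqref{hyp:H} is that the labels appearing with nonzero coefficient in $[x,x_j]_{{\rm Lie}(G)}=\sum_{i\in{\rm supp}(x)\cap{\mathcal N}_{x_j}}\pm\alpha_i\,c(\{x_i,x_j\})$ are pairwise distinct, so this label set must coincide with the label support $L(c)\subset{\mathcal C}$ for every $j$ in $I:=\{j:\lambda_j\neq 0\}$. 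Because each $c_\ell\in L(c)$ labels a unique edge whose two endpoints must contain $x_j$ for every $j\in I$, we deduce $|I|\leq 2$.

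The remaining case analysis on $|I|\in\{1,2\}$ is where the no-degree-one assumption enters, and I expect it to be the main technical step. The vanishing $[x,x_j]_{{\rm Lie}(G)}=0$ for $j\notin I$ forces $\alpha_i=0$ whenever $x_i\in{\mathcal N}_{x_j}$, so every $x_i\in{\rm supp}(x)$ satisfies ${\mathcal N}_{x_i}\subseteq V_I:=\{x_j:j\in I\}$. When $|I|=1$, this already bounds $\deg(x_i)\leq 1$ on ${\rm supp}(x)$, an immediate contradiction. When $|I|=2$, say $I=\{j_1,j_2\}$, the unique label in $L(c)$ must label the edge $\{x_{j_1},x_{j_2}\}$; inspecting $[x,x_{j_1}]_{{\rm Lie}(G)}=\lambda_{j_1}c$ then shows that the only possible contributor from ${\rm supp}(x)\cap{\mathcal N}_{x_{j_1}}$ is $x_{j_2}$, whence $x_{j_2}\in{\rm supp}(x)$ and ${\mathcal N}_{x_{j_2}}\subseteq V_I$. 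Since $x_{j_2}$ is not its own neighbor, ${\mathcal N}_{x_{j_2}}=\{x_{j_1}\}$, producing a degree-one vertex and the desired contradiction. The delicate bookkeeping is tracking exactly which vertices of $V_I$ lie in ${\rm supp}(x)$, but once it is sorted out the argument collapses quickly.
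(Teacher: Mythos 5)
Your proof is correct, and on the second half of the statement it is actually more careful than the paper's own argument. For the dimension formula you proceed exactly as the paper does: hypothesis \eqref{hyp:H} makes the brackets $\{[v,x_j]_{{\rm Lie}(G)}:x_j\in{\mathcal N}_v\}$ linearly independent, so $\ker{\rm ad}_v\cap{\mathfrak g}_{-1}={\rm span}\{V\setminus{\mathcal N}_v\}$ and the count follows. For the equivalence, the paper simply asserts that the conclusion ``follows from the rewritten corank one condition'' \eqref{eq:dimcount}, but the dimension formula it has established applies only to basis vectors $v\in V$, while the corank one condition quantifies over \emph{all} nonzero $x\in{\mathfrak g}_{-1}$; the ``only if'' direction therefore requires ruling out non-vertex combinations $x=\sum_i\alpha_i x_i$ with $\dim{\rm im}\,{\rm ad}_x\le 1$, a step the paper leaves entirely implicit. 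You supply it: excluding ${\rm ad}_x=0$ via Proposition~\ref{prop:center}, using \eqref{hyp:H} to force the label support of each nonzero $[x,x_j]_{{\rm Lie}(G)}$ to coincide with that of the common image vector $c$, deducing $|I|\le 2$ because each label marks a single edge with only two endpoints, and then extracting a degree-one vertex in both cases $|I|=1$ and $|I|=2$ (in the latter, your observation that ${\mathcal N}_{x_{j_2}}\subseteq\{x_{j_1}\}$ closes the argument; despite your closing hedge, no further bookkeeping is needed). The only caveat is your standing assumption that $G$ is connected, which the theorem does not state but which the paper also needs (Proposition~\ref{prop:center} assumes it); for disconnected $G$ one applies the argument componentwise, noting that a component in which every vertex has degree at least two has at least three vertices. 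In short: same route as the paper for the formula, and a genuine completion of the paper's sketch for the converse.
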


\begin{proof}
It is enough to show that \eqref{hyp:H} implies
\[
\ker{\rm ad}_v\cap{\mathfrak g}_{-1}={\rm span}\{V\setminus{\mathcal N}_v\},
\]
for any vertex $v\in V$. 

It is easy to see that if $w\in V\setminus{\mathcal N}_v$, then ${\rm ad}_v(w)=[v,w]_{{\rm Lie}(G)}=0$, that is the inclusion
\[
\ker{\rm ad}_v\cap{\mathfrak g}_{-1}\supset{\rm span}\{V\setminus{\mathcal N}_v\}
\]
holds without assuming hypothesis \eqref{hyp:H}. On the other hand, if $V=\{x_1,\dotsc,x_n\}$
\[
x=\sum_{i=1}^n\alpha_ix_i\in\ker{\rm ad}_v\cap{\mathfrak g}_{-1}\implies[v,x]_{{\rm Lie}(G)}=\sum_{i=1}^n\alpha_i[v,x_i]_{{\rm Lie}(G)}=\sum_{x_i\in{\mathcal N}_v}\alpha_i[v,x_i]_{{\rm Lie}(G)}=0.
\]
Under hypothesis \eqref{hyp:H}, it follows that $\alpha_i=0$, whenever $x_i\in{\mathcal N}_v$, in the same way as in Proposition \ref{prop:center}. Therefore we see that
\[
x=\sum_{i=1}^n\alpha_ix_i=\sum_{x_i\in V\setminus{\mathcal N}_v}\alpha_ix_i\in{\rm span}\{V\setminus{\mathcal N}_v\}\implies\ker{\rm ad}_v\cap{\mathfrak g}_{-1}\subset{\rm span}\{V\setminus{\mathcal N}_v\}.
\]

The equality of the statement follows immediately from counting dimensions. The fact that, under hypothesis \eqref{hyp:H} the Tanaka prolongation of ${\rm Lie}(G)$ is infinite dimensional if and only if $G$ has a vertex of degree one, follows from the rewritten corank one condition shown in Equation \eqref{eq:dimcount}.
\end{proof}

Following \cite{ben}, it is possible to describe explicitly the Tanaka prolongation ${\rm Prol}({\rm Lie}(K_n))$ under hypothesis \eqref{hyp:H}, since obviously ${\rm Lie}(K_n)$ is a free nilpotent Lie algebra of step 2 
\[
{\rm Prol}({\rm Lie}(K_n))=\begin{cases}{\mathfrak j},&n=2,\\B_n,&n\geq3,\end{cases}
\]
where ${\mathfrak j}$ is the infinite dimensional prolongation of the Heisenberg Lie algebra (see \cite{KR,klr}). In the general case that $G\neq K_n$, then a complete description of the Tanaka prolongation of ${\rm Lie}(G)$ seems to be out of reach, due to the complexity of the computations. 



\begin{remark}
Since the structure constants for the basis $\{x_1,\dotsc,x_n,c_1,\dotsc,c_m\}$ of ${\rm Lie}(G)$ are 0 or $\pm1$, then Malcev's criterion asserts that ${\rm LIE}(G)$ has a lattice $\Lambda$, see \cite{malcev}. It follows that each graph Lie algebra gives rise naturally to a nilmanifold ${\rm LIE}(G)/\Lambda$. It is interesting -- at least to the author -- to study if one can read geometric or analytic information of the nilmanifold from the combinatorial data of the graph $G$. These potential interplays will be explored in future research.
\end{remark}

\section*{Acknowledgments}

The author would like to thank professors Carolina Araujo and Misha Verbitsky for some interesting mathematical discussions when visiting IMPA, Brazil. Several of the ideas presented in this note were developed during that visit. The author would also like to express his gratitude to Dr. Diego Lagos for reading earlier versions of the manuscript.


\end{document}